\documentstyle[10pt,amscd,xypic,amssymb]{amsart}

\xyoption{all}
\CompileMatrices

\emergencystretch=2cm
\newcommand{\nc}{\newcommand}

\makeatletter
\@addtoreset{equation}{section}
\makeatother

\newenvironment{proof}{{\noindent \textbf{Proof}\,\,}}{\hspace*{\fill}$\Box$\medskip}

\newtheorem{theorem}[equation]{Theorem}
\newtheorem{proposition}[equation]{Proposition}

\newtheorem{corollary}[equation]{Corollary}

\theoremstyle{definition}

\theoremstyle{remark}

\nc{\fa}{{\mathfrak{a}}}
\nc{\fb}{{\mathfrak{b}}}
\nc{\fg}{{\mathfrak{g}}}
\nc{\fh}{{\mathfrak{h}}}
\nc{\fj}{{\mathfrak{j}}}
\nc{\fn}{{\mathfrak{n}}}
\nc{\fu}{{\mathfrak{u}}}
\nc{\fp}{{\mathfrak{p}}}
\nc{\fr}{{\mathfrak{r}}}
\nc{\ft}{{\mathfrak{t}}}
\nc{\fsl}{{\mathfrak{sl}}}
\nc{\fgl}{{\mathfrak{gl}}}
\nc{\hsl}{{\widehat{\mathfrak{sl}}}}
\nc{\hgl}{{\widehat{\mathfrak{gl}}}}
\nc{\hg}{{\widehat{\mathfrak{g}}}}
\nc{\chg}{{\widehat{\mathfrak{g}}}{}^\vee}
\nc{\hn}{{\widehat{\mathfrak{n}}}}
\nc{\chn}{{\widehat{\mathfrak{n}}}{}^\vee}

\nc{\fC}{{\mathfrak{C}}}
\nc{\fZ}{{\mathfrak{Z}}}

\nc{\pol}{{\text{Poles}}}

\nc{\BA}{{\mathbb{A}}}
\nc{\BC}{{\mathbb{C}}}
\nc{\BM}{{\mathbb{M}}}
\nc{\BN}{{\mathbb{N}}}
\nc{\BQ}{{\mathbb{Q}}}
\nc{\BF}{{\mathbb{F}}}
\nc{\BP}{{\mathbb{P}}}
\nc{\BR}{{\mathbb{R}}}
\nc{\BZ}{{\mathbb{Z}}}

\nc{\CA}{{\mathcal{A}}}
\nc{\CB}{{\mathcal{B}}}
\nc{\CE}{{\mathcal{E}}}
\nc{\CF}{{\mathcal{F}}}
\nc{\tCF}{{\widetilde{\CF}}}
\nc{\oCF}{{\overline{\CF}}}
\nc{\CG}{{\mathcal{G}}}
\nc{\CL}{{\mathcal{L}}}
\nc{\CM}{{\mathcal{M}}}
\nc{\CH}{{\mathcal{H}}}
\nc{\CN}{{\mathcal{N}}}
\nc{\CO}{{\mathcal{O}}}
\nc{\CP}{{\mathcal{P}}}
\nc{\CQ}{{\mathcal{Q}}}
\nc{\CS}{{\mathcal{S}}}
\nc{\CT}{{\mathcal{T}}}
\nc{\CU}{{\mathcal{U}}}
\nc{\CV}{{\mathcal{V}}}
\nc{\CW}{{\mathcal{W}}}
\nc{\tCW}{{\widetilde{\CW}}}
\nc{\oCW}{{\overline{\CW}}}

\nc{\uu}{{U_q(\hgl_n)}}
\nc{\uup}{{U_q^+(\hgl_n)}}
\nc{\uug}{{U_q^\geq(\hgl_n)}}
\nc{\uus}{{U_q(\hsl_n)}}
\nc{\uuu}{{U_q(\hgl_1)}}
\nc{\nn}{{\mathbb{N}}^n}

\nc{\tT}{{T}}

\nc{\wfZ}{{\widetilde{\fZ}}}

\nc{\od}{{\overline{d}}}
\nc{\rg}{{\textrm{R}\Gamma}}
\nc{\erg}{{\emph{R}\Gamma}}
\nc{\id}{{\textrm{id}}}

\def\e{\varepsilon}

\def\slz{SL_2(\BZ)}

\def\sym{\text{Sym}}

\def\D{D}

\def\sym{\text{Sym}}

\begin{document}

\title[Operators on Symmetric Polynomials]{\Large{\textbf{Operators on Symmetric Polynomials}}}

\author[Andrei Negut]{Andrei Negut}
\address{Columbia University, Department of Mathematics, New York, USA}
\address{Simion Stoilow Institute of Mathematics, Bucharest, Romania}
\email{andrei.negut@@gmail.com}

\maketitle



\section{Introduction}

The purpose of this note is to give a brief survey of certain operators on symmetric polynomials. There are many ways to write them, but the way we will mostly focus on was defined in \cite{F} in terms of the shuffle algebra. This algebra was introduced quite generally in \cite{FO}, though we will only need a particular case of their construction, which we will recall in Subsection \ref{sub:shuf}. The shuffle algebra (more precisely, its Drinfeld double) acts on the well-known ring of symmetric polynomials:
$$
\Lambda = \BQ(q,t)[x_1,x_2,...]^{\sym}
$$
This action includes certain well-known operators on this vector space, such as: \\

\begin{itemize} 

\item the operator of multiplication by $f$, for any $f\in \Lambda$, \\

\item the Macdonald eigenoperator $\D_g$ for any $g\in \Lambda$, given by: 

\end{itemize}

\begin{equation}
\label{eqn:eigen}
\D_g(P_\lambda) = g \left[ (1-q)(1-t) \cdot \text{weights outside the partition }\lambda \right] \cdot P_\lambda
\end{equation}
$$$$
\footnote{The above means the following: the Young diagram of the partition $\lambda$ is a finite subset lodged in the corner of the first quadrant. The weights outside the diagram are $q^it^j$ for $i\geq 0, j \geq \lambda_i$. Multiplying them by $(1-q)(1-t)$ should be understood as a plethystic substitution} Throughout this note, $P_\lambda$ will denote the modified Macdonald polynomials, which are orthogonal with respect to the inner product \eqref{eqn:inner}. One of the new formulas we present in this paper expresses $D_g$ as a certain vertex operator in the spirit of \cite{F}, for a wide class of symmetric polynomials $g$ called rim-hook (or ribbon) skew Schur functions. This uses the results of \cite{N} about the structure of the shuffle algebra. In particular, we have: \\

\begin{theorem} 
\label{thm:one}

The Macdonald eigenoperator $\D_n = \D_{p_n}$ is given by: 
$$
\D_n = \int_{|z_1|\gg...\gg |z_n|} \frac {dz_1...dz_n}{(2\pi i)^n z_1... z_n} \cdot \frac { \left( \sum_{i=1}^n \frac {z_n (qt)^{n-i}}{z_i} \right) \prod_{i<j} \omega \left( \frac {z_i}{z_j} \right)}{\left(1 - \frac {z_2qt}{z_1}\right)...\left(1 - \frac {z_{n}qt}{z_{n-1}}\right)}
$$

$$
\exp \left(- \sum_{k\geq 1} \beta_k p_k \frac {(z_1^{-k}+...+z_n^{-k}) q^{-\frac k2} t^{-\frac k2}}k \right)  \exp \left(- \sum_{k\geq 1} \beta_k p_k^\dagger \frac {z_1^k+...+z_n^k}k \right) 
$$
where $\beta_k$ and $\omega(x)$ are defined in \eqref{eqn:inner} and \eqref{eqn:omega}. \\

\end{theorem}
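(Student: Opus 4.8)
The plan is to recognize the right-hand side as the operator assigned to a single element of the shuffle algebra, and then to pin that element down. Recall from Subsection~\ref{sub:shuf} (and \cite{F}) the recipe by which the shuffle algebra acts on the Fock space $\Lambda$: a shuffle element represented by a rational function $R(z_1,\dots,z_n)$ is sent to the $n$-fold contour integral of $R(z_1,\dots,z_n)\prod_{i<j}\omega(z_i/z_j)$ against the two bosonic exponentials appearing in the statement. Comparing with the displayed formula, Theorem~\ref{thm:one} asserts precisely that $\D_n$ is the image under this recipe of the shuffle element $R_n$ represented by
$$
R_n(z_1,\dots,z_n)\ =\ \frac{\sum_{i=1}^n z_n\,(qt)^{n-i}/z_i}{\big(1-z_2qt/z_1\big)\cdots\big(1-z_nqt/z_{n-1}\big)}
$$
(for the recipe one may freely replace a representative by its symmetrization). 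It therefore suffices to prove the purely algebraic fact that, inside the shuffle algebra, $R_n$ coincides with the element whose Fock action is the Macdonald eigenoperator $\D_{p_n}$.

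To carry this out I would use the structure results of \cite{N}. They give enough control over the shuffle algebra to locate the commuting family $\{A_k\}_{k\ge1}$ of elements realizing the Macdonald commuting system on $\Lambda$ (equivalently, the operators $\D_{p_k}$, after the classical passage between elementary and power-sum symmetric functions), together with explicit formulas or characterizing relations for them. The identification $R_n=A_n$ would proceed from: the fact that the degree-zero numerator and the denominator $\prod_{k=2}^n(1-z_kqt/z_{k-1})$ place $R_n$ in the same graded ``slope'' subspace of the shuffle algebra as $A_n$, as described in \cite{N}; the matching of leading terms under the specializations $z_i\to 0$ (or the evaluation maps of \cite{N}), which determine an element of that subspace; and compatibility with the rank-one wheel conditions. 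Equivalently, and perhaps more transparently, one checks that $R_1$ acts as the classical Macdonald operator $\D_{p_1}$ and that the $R_n$ obey the recursion $[R_1,R_n]=(\text{explicit scalar})\cdot R_{n+1}$, which together with the relations of \cite{N} characterizes $\{A_k\}$.

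Granting $R_n=A_n$, the eigenoperator statement has two stages. First, diagonality of the action of $R_n$ in the modified Macdonald basis $\{P_\lambda\}$: this holds because, by \cite{N}, the $A_k$ commute among themselves and interact with the Cartan (Heisenberg) part of the algebra in the twisted way that forces simultaneous diagonalizability precisely in the basis orthogonal for \eqref{eqn:inner}. Second, the computation of the eigenvalue, which I would do by an iterated residue: contracting the nested contours $|z_1|\gg\cdots\gg|z_n|$ inward, the only poles met come from the factors $1-z_kqt/z_{k-1}$ and from $\prod_{i<j}\omega(z_i/z_j)$, and evaluating the resulting residues on $P_\lambda$ organizes the answer as a sum over rim-hook (ribbon) shaped configurations of boxes relative to the Young diagram of $\lambda$ --- the combinatorics flagged in the introduction. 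This sum should collapse to $(1-q^n)(1-t^n)$ times the regularized sum of $q^{ni}t^{nj}$ over the cells $(i,j)$ outside $\lambda$, i.e. to $p_n\big[(1-q)(1-t)\cdot(\text{weights outside }\lambda)\big]$ as in \eqref{eqn:eigen}. This last residue identification --- matching an a priori merely rational eigenvalue with the content sum, uniformly in $n$ and $\lambda$, and reconciling the plethystic regularization of the infinite sum over boxes outside $\lambda$ with the finite output of the residue calculus --- is the step I expect to be the main obstacle; if one instead routes everything through the algebra, the delicacy migrates to the normalization bookkeeping, tracking the constants $\beta_k$ and the powers of $qt$ so that the eigenvalue emerges exactly as in \eqref{eqn:eigen} with no stray scalar or index shift.
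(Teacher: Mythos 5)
Your first half — recognizing the right-hand side as the image, under the shuffle-algebra action of Subsections~\ref{sub:shuf}--\ref{sub:order}, of the explicit rational function $R_n = p_{0,n}$, and reducing the theorem to identifying $R_n$ with the shuffle element corresponding to $u_{0,n}$ via the structure theory of \cite{N} — is exactly the paper's route (Theorem~\ref{thm:power} specialized to $m=0$, with the prefactor $(qt)^{n/2}\beta_n/((q^n-1)(t^n-1))=1$ absorbing the normalization). But your second half contains two genuine problems. First, the proposed recursion $[R_1,R_n]=(\text{scalar})\cdot R_{n+1}$ cannot hold: the elements $u_{0,k}$ all lie on a single line through the origin, so by relation \eqref{eqn:rel1} the family $\{R_k\}$ is \emph{commutative} — that is the whole point of its being the Macdonald commuting system. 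The correct bootstrap from $k=1$ to general $k$ goes through the \emph{horizontal} generators $U_{\pm 1,0}$ (multiplication by $p_1$ and its adjoint) and relation \eqref{eqn:rel2}, i.e., through the fact that $U_{\pm1,0}$ and $U_{0,\pm1}$ generate all of $\CA$; this is how the Proposition in Subsection~\ref{sub:act2} transports the identification from $n=1$ to all $n$.

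Second, and more importantly, you leave unresolved precisely the step that carries the content of the theorem: connecting the abstract algebra element $u_{0,n}$ (or the shuffle element $R_n$) to the concrete eigenoperator $\D_{p_n}$ with eigenvalue \eqref{eqn:eigen}. Your plan is to do this by an iterated residue computation producing the plethystic content sum for every $n$ and $\lambda$, and you concede this is ``the main obstacle.'' The paper never performs this computation. Instead it observes that once the two actions of $\CA$ on $\Lambda$ (the vertex-operator action of Subsection~\ref{sub:act1} and the Macdonald action of Subsection~\ref{sub:act2}) are known to agree on the generators $U_{\pm1,0}$ and $U_{0,\pm1}$, they agree everywhere; agreement for $U_{0,\pm1}$ — the only nontrivial case, and exactly your ``$R_1$ acts as $\D_{p_1}$'' base case — is Theorem 1.2 of \cite{GHT}. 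So the hard analytic/combinatorial identification you flag is needed only in rank one, where it is an existing result to be quoted, not reproved. As written, your argument is a correct strategy with its crucial lemma missing and an incorrect induction step offered in its place.
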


Most of the results presented in this paper have been proved in other works, either explicitly or implicitly. We will therefore not seek to give a self-contained presentation here, but instead provide a survey of the results, with indications of where one can find proofs. We should warn the reader that our notations differ slightly from those of \cite{F}, in that our Macdonald polynomials are obtained from theirs via a plethystic substitution, and our $(q,t)$ are their $(q^{-1},t)$. \\

I would like to thank Boris Feigin, Eugene Gorsky, Andrei Okounkov, Alexander Tsymbaliuk for their patience in explaining many of these things to me, and for helping me to patch up this beautiful picture. I would also like to thank Adriano Garsia and Vadim Gorin for their interest in these operators, which ultimately led me to write this survey note down, and for their useful comments and corrections. 

\section{The operators}

\subsection{} 
\label{sub:algebra} 

We will begin by recalling the double elliptic Hall algebra $\CA$, in the combinatorial presentation of \cite{BS} (see also \cite{SV}). This algebra is generated by elements $u_{m,n}$ for $(m,n)\in \BZ^2 \backslash (0,0)$, subject to the relations:
\begin{equation}
\label{eqn:rel1}
[u_{km,kn}, u_{lm,ln}] = \frac {k \delta_{k+l}^0 \left( (qt)^{\frac {km}2} - (qt)^{-\frac {km}2}\right)}{(q^{\frac k2}-q^{-\frac k2})(t^{\frac k2}-t^{-\frac k2})( (qt)^{\frac k2} - (qt)^{-\frac k2} )} 
\end{equation}
for any coprime $m,n$, and:
\begin{equation}
\label{eqn:rel2}
[u_{m,n}, u_{m',n'}] =  \frac {(qt)^{\frac {m-M}2} \cdot v_{m+m',n+n'}}{(q^{\frac 12}-q^{-\frac 12})(t^{\frac 12}-t^{-\frac 12})( (qt)^{\frac 12} - (qt)^{-\frac 12})}
\end{equation}
for any clockwise oriented triangle $T = \{(0,0), (m,n), (m+m',n+n')\}$ with no lattice points inside and on at least one of the edges, where we write:
$$
1 + \sum_{k\geq 1} v_{km,kn} z^k = \exp \left( \sum_{k\geq 1} (q^{\frac k2}-q^{-\frac k2})(t^{\frac k2}-t^{-\frac k2})((qt)^{\frac k2} - (qt)^{-\frac k2}) \cdot u_{km,kn} \frac {z^k}k \right) 
$$
for any coprime $m,n$, and let $(M,N)$ denote the middle vertex of the triangle $T$ with respect to the horizontal direction. In case two vertices have the same horizontal coordinate, we consider the "rightmost" one to be the one which is higher up. Note that the algebra studied in \cite{BS}, \cite{SV} depended on two central elements $c_1, c_2$, which we specialize to $c_1=1$ and $c_2 = (qt)^{\frac 12}$ throughout this paper. \\

\subsection{} The elliptic Hall algebra $\CA$ is also known as $U_{c_2}(\hgl_1)$, and as such it has a lot in common with the theory of quantum groups. In particular, it can be divided in a positive part, a Cartan part and a negative part, but the interesting thing is that this can be done in many ways. Namely, take any line in the lattice $\BZ^2$ passing through the origin. Depending on whether $(m,n)$ is on one side of this line, on the line, or on the other side of the line, we may call the generator $u_{m,n}$ positive, Cartan, or negative. Then $\CA$ turns out to be a double of its positive half, as in the theory of quasi-triangular Hopf algebras. \\

The ambiguity of the chosen line is no surprise. In fact, there is an almost-action of $\slz$ by automorphisms of the algebra $\CA$, simply by permuting the lattice points $(m,n)$ and the corresponding generators. The word almost is a way to sweep under the rug the fact that it's actually the universal cover of $\slz$ that acts, and it multiplies the generators $u_{m,n}$ by a certain power of the central elements $c_1$ and $c_2$ beside permuting the point $(m,n)$ (see \cite{BS}). If one specializes the central elements as we did, this action is lost. But modulo this technical point, the essential thing is that the algebra $\CA$ is naturally $\slz$ symmetric. \\

\subsection{} 
\label{sub:act1}

From now on, let $\dagger$ denote adjoint with respect to the scalar product: 
\begin{equation}
\label{eqn:inner}
\langle p_k, p_k \rangle = - \frac {k}{\beta_k}, \quad \text{ where }\quad \beta_k = (q^{\frac k2} - q^{-\frac k2})(t^{\frac k2} - t^{-\frac k2})
\end{equation}
\footnote{Note that this is a slight plethystic modification of the usual Macdonald inner product, and the consequence of this will be replacing Macdonald polynomials by their modified versions. In fact, comparing with \cite{F}, the corresponding plethystic substitution is that: 
$$
\text{our }p_k = \text{their }p_k \cdot \frac {t^{-\frac k2}}{q^{-\frac k2}-q^{\frac k2}}
$$ 
and also our $q$ equals their $q^{-1}$, while $t$ is the same} We will also denote by $p_k$ the operator on $\Lambda$ of multiplication with the power-sum function, and hope that this will not cause any ambiguity. In \cite{F}, the authors define an action of $\CA$ on $\Lambda$ by sending $u_{m,n}$ to the operators $U_{m,n} \in \text{End}(\Lambda)$ given by:

$$
U_{m,0} = p_m, \qquad U_{-m,0} = p_m^\dagger, \qquad \qquad \qquad \forall \ m > 0
$$

$$
U_{m,1} = \frac {(qt)^{\frac m2\delta_{m<0}}}{\beta_1} \int z^m \cdot \exp \left(- \sum_{k\geq 1} \beta_k p_k \frac {z^{-k} q^{-\frac k2} t^{-\frac k2}}k \right)  \exp \left(- \sum_{k\geq 1} \beta_k p_k^\dagger \frac {z^k}k \right) Dz
$$

$$
U_{-m,-1} = \frac {(qt)^{-\frac m2\delta_{m < 0}}}{\beta_1} \int z^m \cdot \exp \left(\sum_{k\geq 1} \beta_k p_k \frac {z^{k}}k \right)  \exp \left(\sum_{k\geq 1} \beta_k p_k^\dagger \frac {z^{-k}q^{\frac k2}t^{\frac k2}}k \right) Dz 
$$
for all $m\in \BZ$, where $Dz = \frac {dz}{2\pi i z}$. \footnote{The factors in front of $U_{\pm m,\pm 1}$ and the various powers of $(qt)^{\frac 12}$ are a matter of normalization, and have been chosen so that $U_{m,n}^\dagger = U_{-m,n}$ for all $m,n \in \BZ$} The integrals in the above two formulas are taken over small contours around $\infty$ and $0$, respectively, so it is implicit that the $z$ variable is thought to be much larger (if the sign is $+$) or smaller (if the sign is $-$) than the $x$ variables which determine the symmetric functions $p_k$. Note that the above is enough to define operators $U_{m,n}$ on $\Lambda$, via the relations of Subsection \ref{sub:algebra}. As proved in Proposition 2.9 of \cite{AFS}, these operators verify the relations of $\CA$, and thus give a well-defined action on $\Lambda$. \\

\subsection{} 
\label{sub:act2}

As before, we write $D_n = D_{p_n}$ and let $D_{-n} = D_{p_n}|_{q,t\leftrightarrow q^{-1},t^{-1}}$. The second action of $\CA$ on $\Lambda$ that is described in \cite{AFS}, \cite{SV} is given by sending $u_{m,n}$ to the operators $U_{m,n} \in \text{End}(\Lambda)$ given by:
$$
U_{0,\pm n} = \frac {\D_{\pm n}}{\beta_n}  \quad \forall \ n > 0, \qquad \qquad \qquad U_{\pm 1,0} = X_{\pm 1}, 
$$
Same as above, these elements generate the whole $\CA$, and it is proved in Proposition 2.7 of \cite{AFS} that they induce a well-defined action of $\CA$ on $\Lambda$. \\

\begin{proposition} The two actions of $\CA$ on $\Lambda$ defined in the previous and current subsections are actually one and the same. \\
\end{proposition}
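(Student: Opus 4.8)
The plan is to show that two actions of $\CA$ on $\Lambda$ which agree on a set of algebra generators must coincide, exploiting the $\slz$-symmetry of $\CA$ discussed in Subsection 2.2. The first action (Subsection \ref{sub:act1}) sends $u_{m,0}$, $u_{-m,0}$ to $p_m$, $p_m^\dagger$ and realizes the "vertical" subalgebra via explicit vertex operators $U_{m,\pm 1}$; the second action (Subsection \ref{sub:act2}) sends $u_{0,\pm n}$ to $\D_{\pm n}/\beta_n$ and $u_{\pm 1,0}$ to $X_{\pm 1}$, and realizes the "horizontal" subalgebra. Since $\CA$ carries the (universal cover of the) $\slz$-action permuting the lattice points $(m,n)$, the element $S = \left( \begin{smallmatrix} 0 & -1 \\ 1 & 0 \end{smallmatrix} \right)$ gives an automorphism $\sigma$ of $\CA$ (well-defined after our specialization $c_1 = 1$, $c_2 = (qt)^{1/2}$, up to the harmless scalar ambiguity noted in the text) with $\sigma(u_{m,n}) = u_{-n,m}$, so that the "horizontal" generators of the second presentation are the $\sigma$-images of the "vertical" generators of the first. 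So it suffices to transport the first action through $\sigma$ and check that what one gets on the horizontal line matches $X_{\pm 1}$ and $\D_{\pm n}/\beta_n$.

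Concretely, I would proceed in the following steps. First, fix a generating set: by the combinatorial presentation of Subsection \ref{sub:algebra}, $\CA$ is generated by $u_{1,0}, u_{-1,0}$ together with any one $u_{0,n}$ and $u_{0,-n}$ for $n$ large (or, more economically, by $u_{0,1}, u_{0,-1}, u_{1,0}, u_{-1,0}$), since repeated commutators of these reach every lattice point — this is exactly the statement, already invoked in the excerpt, that the elements listed in each subsection generate all of $\CA$. Second, compute the image of the first action under $\sigma$: define $U'_{m,n} := U_{-n,m}$ (first action), so $U'_{0,\pm 1} = U_{\mp 1, 0} = p_1^{\mp\dagger}$-type operators, and $U'_{\pm 1, 0} = U_{0,\pm 1}$. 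Because $\sigma$ is an algebra automorphism and the $U$'s satisfy the $\CA$-relations (Proposition 2.9 of \cite{AFS}), the $U'$'s automatically satisfy them too. Third, and this is the crux, identify the generators: one must check that under the second action the operator $X_{\pm 1}$ equals $U'_{\pm 1, 0} = U_{0,\pm 1}$ (first action), and that $\D_{\pm 1}/\beta_1 = U'_{0,\pm 1} = U_{\mp 1, 0}^{(1)}$. The latter is essentially the $n=1$ case of Theorem \ref{thm:one}: one evaluates the single contour integral defining $U_{-1,0}$ in the first action and matches it with $\D_1 = \D_{p_1}$, which acts on $P_\lambda$ by \eqref{eqn:eigen} with $g = p_1$, i.e. by the sum of $(1-q)(1-t)q^it^j$ over boxes outside $\lambda$ — a computation that can be done directly on the modified Macdonald basis, or cited from \cite{F}, \cite{SV}. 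Finally, invoke rigidity: two actions of an algebra agreeing on a generating set are equal, so the two actions of $\CA$ on $\Lambda$ coincide.

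The main obstacle is the third step: matching the vertex-operator expression for $U_{0,\pm 1}$ (obtained as a nested commutator of the explicitly-given $U_{m,\pm 1}$ from the first action) with the eigenoperator normalization $\D_{\pm 1}/\beta_1$ (and with $X_{\pm 1}$) in the second action, including getting all the powers of $q, t, (qt)^{1/2}$ and the factors $\beta_k$ exactly right. This is precisely where the normalizations in the footnotes of Subsection \ref{sub:act1} were engineered, and where the plethystic discrepancy with \cite{F} ("our $p_k$ = their $p_k \cdot t^{-k/2}/(q^{-k/2}-q^{k/2})$", "our $q$ = their $q^{-1}$") must be tracked carefully. A secondary subtlety is the scalar ambiguity in the $\slz$-action after specialization of the central charges: one should check that $\sigma = \sigma_S$ can be chosen so that no stray power of $(qt)^{1/2}$ is introduced on the relevant generators, or else absorb it into the normalization — which is consistent with the stated choice $U_{m,n}^\dagger = U_{-m,n}$. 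Once these bookkeeping points are settled, the argument is complete. $\Box$
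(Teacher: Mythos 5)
Your overall skeleton --- reduce to the generating set $\{u_{\pm 1,0}, u_{0,\pm 1}\}$, check agreement on those four elements, and observe that the only substantive check concerns the image of $u_{0,\pm 1}$ --- is exactly the paper's argument. The paper notes that $\CA$ is generated by $u_{\pm 1,0}$ and $u_{0,\pm 1}$ (easily seen from \eqref{eqn:rel2}), that agreement on $u_{\pm 1,0}$ is obvious, and that agreement on $u_{0,\pm 1}$ --- i.e.\ the identity between the $m=0$ vertex operator $U_{0,\pm 1}$ of Subsection \ref{sub:act1} and $\D_{\pm 1}/\beta_1$ --- follows from Theorem 1.2 of \cite{GHT} after plethystic substitutions.

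However, the $\slz$-automorphism layer you insert is both unavailable and, as applied, incorrect. First, as the paper itself stresses, after the specialization $c_1=1$, $c_2=(qt)^{1/2}$ the $\slz$-action ``is lost'': the matrix $S$ moves the central charges to a different specialization, so there is no automorphism $\sigma$ of the specialized algebra with $\sigma(u_{m,n})=u_{-n,m}$, and the discrepancy is not merely a ``harmless scalar ambiguity.'' Second, and more seriously, even granting such a $\sigma$, your step three matches the wrong operators: you propose to check that $X_{\pm 1}$ (the image of $u_{\pm 1,0}$ under the second action) equals $U_{0,\pm 1}$ (the image of $u_{0,\pm 1}$ under the first), and that $\D_{\pm 1}/\beta_1$ equals $U_{\mp 1,0}=p_1$ or $p_1^\dagger$. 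Neither identity is true ($\D_1$ is diagonal on the Macdonald basis, not multiplication by $p_1$), and verifying the identities you wrote would prove that the second action equals the first action precomposed with $\sigma$ --- a different (and false) statement. There is also no ``contour integral defining $U_{-1,0}$''; that operator is $p_1^\dagger$ by definition, the contour integrals being reserved for $U_{m,\pm 1}$. The repair is simply to delete $\sigma$: the element $u_{0,\pm 1}$ already lies in the first action's generating data (the $m=0$ case of $U_{m,\pm 1}$), so one compares $U_{0,\pm 1}$ directly with $\D_{\pm 1}/\beta_1$, and $U_{\pm 1,0}=p_1,\,p_1^\dagger$ directly with $X_{\pm 1}$; the former is the one nontrivial input, which the paper takes from \cite{GHT}, and the latter is immediate.
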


\begin{proof} It is easy to see from \eqref{eqn:rel2} that the algebra $\CA$ is generated by $U_{\pm 1, 0}$ and $U_{0,\pm 1}$. Then one needs to check that these elements act on $\Lambda$ in the same way in the two descriptions. This is obvious for $U_{\pm 1, 0}$, whereas for $U_{0,\pm 1}$ this follows from Theorem 1.2 of \cite{GHT} (after certain plethystic substitutions to unify the conventions). 

\end{proof}

\subsection{} 
\label{sub:shuf}

The above proposition will allow us to obtain integral formulas (in the spirit of Subsection \ref{sub:act1}) for the operators $D_n$. We will do this more generally, by providing integral formulas for the action of all $U_{m,n}$ with $n \neq 0$. This comes about via the shuffle algebra of \cite{FO}, which we now recall. Consider the vector space:
$$
\CV = \bigoplus_{n\geq 0} \BC(q,t)(z_1,...,z_n)^{\sym}
$$
endowed with the \textbf{shuffle product}:
$$
P(z_1,...,z_n) * P'(z_1,...,z_{n'}) = \sym \left[ \frac {P(z_1,...,z_n)P'(z_{n+1},...,z_{n+n'})}{n! n'!}\prod_{n+1 \leq j \leq n+n'}^{1\leq i \leq n} \omega \left(\frac {z_i}{z_j} \right)\right]
$$
where: 
\begin{equation}
\label{eqn:omega}
\omega(x) = \frac {(x - 1)(x - qt)}{(x - q)(x - t)}
\end{equation}
and $\sym$ denotes symmetrization with respect to all variables. Inside the algebra $\CV$, we define the \textbf{shuffle algebra} $\CS$ to consist of symmetric rational functions of the form:
\begin{equation}
\label{eqn:shuf}
P(z_1,...,z_n) =  \frac {p(z_1,...,z_n) \cdot \prod_{1\leq i<j \leq n} (z_i - z_j)^2}{\prod_{1\leq i \neq j \leq n} (z_i - q z_j)(z_i - t z_j)}
\end{equation}
where $p$ is a symmetric Laurent polynomial that satisfies the wheel conditions. \footnote{Although we will not need them explicitly, these conditions are :
$$
p(z_1,z_2,z_3,z_4,...,z_n) = 0 \textrm{ whenever } \left\{ \frac {z_1}{z_2} , \frac {z_2}{z_3}, \frac {z_3}{z_1} \right\} =\left \{q,t,\frac 1{qt} \right\} \text{ as sets}
$$}

\subsection{} 
\label{sub:shufact}

It was proved in \cite{N} that the shuffle algebra $\CS$ is generated by degree one elements $z_1^m$ for $m\in \BZ$. This implies that any shuffle element can be written as a linear combination of:
\begin{equation}
\label{eqn:basis}
P(z_1,...,z_n) = z_1^{m_1} * ... * z_1^{m_n} = \sym \left[z_1^{m_1}...z_n^{m_n} \prod_{1\leq i < j \leq n}  \omega \left(\frac {z_i}{z_j} \right) \right]
\end{equation}
As shown in \cite{SV}, the shuffle algebra is isomorphic to half of the elliptic Hall algebra $\CA$, under either of the following maps:
\begin{equation}
\label{eqn:iso1}
z_1^m \longrightarrow u_{m,1} \cdot (qt)^{-\frac m2\delta_{m<0}}
\end{equation}
for the upper half plane, or:
\begin{equation}
\label{eqn:iso2}
z_1^m \longrightarrow u_{-m,-1} \cdot (qt)^{\frac m2\delta_{m < 0}}
\end{equation}
for the lower half plane. Composing this with the action of $\CA$ on symmetric functions given in Subsection \ref{sub:act1}, we may ask how shuffle elements act on symmetric functions via wither of the above two actions. The simple answer one obtains by iterating those actions is that the shuffle element \eqref{eqn:basis} acts on $\Lambda$ by the formulas:
\begin{equation}
\label{eqn:upp}
U_P^+ = \frac {1}{\beta^n_1} \int_{|z_1|\gg...\gg |z_n|} Dz_1...Dz_n \cdot z_1^{m_1}...z_n^{m_n} \prod_{1\leq i < j \leq n}  \omega \left(\frac {z_i}{z_j} \right) 
\end{equation}

$$
\exp \left(- \sum_{k\geq 1} \beta_k p_k \frac {(z_1^{-k}+...+z_n^{-k}) q^{-\frac k2} t^{-\frac k2}}k \right)  \exp \left(- \sum_{k\geq 1} \beta_k p_k^\dagger \frac {z_1^k+...+z_n^k}k \right) 
$$
for the upper half \eqref{eqn:iso1}, and:
\begin{equation}
\label{eqn:upm}
U_P^- = \frac {1}{\beta^n_1} \int_{|z_1|\ll...\ll |z_n|} Dz_1...Dz_n \cdot z_1^{m_1}...z_n^{m_n} \prod_{1\leq i < j \leq n}  \omega \left(\frac {z_i}{z_j} \right)   
\end{equation}

$$
\exp \left(\sum_{k\geq 1} \beta_k p_k \frac {z_1^{k}+...+z_n^{k}}k \right)  \exp \left(\sum_{k\geq 1} \beta_k p_k^\dagger \frac {(z_1^{-k}+...+z_n^{-k})q^{\frac k2}t^{\frac k2}}k \right)
$$
for the lower half \eqref{eqn:iso2}.  \\

\subsection{}
\label{sub:order}

The above formulas for $U_P^\pm$ have poles when $z_i = q z_j$ and $z_i = t z_j$ for $i<j$. So if we assume $|q|,|t|<1$ (when the sign is $+$) and $|q|,|t|>1$ (when the sign is $-$), we can move the contours to $|z_1| = ... = |z_n|$. These two assumptions may seem contradictory, but they are not: both integrals can be written as a sum of residues which are formal expressions in the parameters $q,t$. The size of these parameters only determines which residues we are considering. So then, after symmetrizing with respect to all the variables involved, we conclude that:
$$
U_P^+ = \frac {1}{\beta^n_1n!} \int^{|q|,|t|<1}_{|z_1| = ... = |z_n|} Dz_1...Dz_n \cdot \sym \left[z_1^{m_1}...z_n^{m_n} \prod_{1\leq i < j \leq n}  \omega \left(\frac {z_i}{z_j} \right)\right] \cdot E_+ = 
$$

\begin{equation}
\label{eqn:upp2}
= \frac {1}{\beta^n_1n!} \int^{|q|,|t|<1}_{|z_1| = ... = |z_n|} P(z_1,...,z_n) \cdot E_+(z_1,...,z_n;x)   Dz_1...Dz_n
\end{equation}
for the upper half \eqref{eqn:iso1}, and:
$$
U_P^- = \frac {1}{\beta^n_1n!} \int^{|q|,|t|>1}_{|z_1| = ... = |z_n|} Dz_1...Dz_n \cdot \sym \left[ z_1^{m_1}...z_n^{m_n}  \prod_{1\leq i < j \leq n}  \omega \left(\frac {z_i}{z_j} \right) \right] \cdot E_- =
$$

\begin{equation}
\label{eqn:upm2}
= \frac {1}{\beta^n_1n!} \int^{|q|,|t|>1}_{|z_1| = ... = |z_n|}  P(z_1,...,z_n) \cdot E_-(z_1,...,z_n;x) Dz_1...Dz_n
\end{equation}
for the lower half \eqref{eqn:iso2}, where $E_+$ and $E_-$ are the products of exponentials in \eqref{eqn:upp} and \eqref{eqn:upm}. While the presence of the symmetrization in the above formulae might make them a bit more cumbersome, we actually prefer \eqref{eqn:upp2} and \eqref{eqn:upm2} because they show that the operators $U_P^\pm$ are actually well-defined for any $P\in \CS$. This is non-trivial, because there are linear relations between the shuffle elements \eqref{eqn:basis}, and it is not a priori clear that they are respected by the formulas \eqref{eqn:upp} and \eqref{eqn:upm}. But if we rewrite these formulas in the form \eqref{eqn:upp2} and \eqref{eqn:upm2}, the fact that they are well-defined becomes immediate. \\

\subsection{} The main result of \cite{N} is to work out which elements of the shuffle algebra correspond to the various $U_{m,n}$. Let us consider the symmetric rational function:
\begin{equation}
\label{eqn:p}
P_{m,n} = \frac {(q-1)^n(t-1)^n}{(q^{g}-1)(t^g-1)} \sym\left[ p_{m,n}(z_1,...,z_n) \prod_{1\leq i<j \leq n} \omega \left( \frac {z_i}{z_j} \right) \right]
\end{equation}
where for any $n>0$, $m\in \BZ$, $g = \gcd(m,n)$, $a = \frac ng$ we write:
$$
p_{m,n}(z_1,...,z_n) =  \frac {\prod_{i=1}^n z_i^{\left \lfloor \frac {im}n \right \rfloor - \left \lfloor \frac {(i-1)m}n \right \rfloor}\sum_{x=0}^{g-1} (q t)^{x} \frac {z_{a(g-1)+1}...z_{a(g-x)+1}}{{z_{a(g-1)}...z_{a(g-x)}}}}{\left(1 - \frac {z_2 qt}{z_1}\right)...\left(1 - \frac {z_{n}qt}{z_{n-1}}\right)}
$$
The main result of \cite{N}, Theorem 1.1, claims that under the isomorphisms \eqref{eqn:iso1} and \eqref{eqn:iso2} we have:
$$
P_{m,n} \longrightarrow u_{m,n} \cdot (qt)^{-\frac m2\delta_{m<0}}  \qquad \qquad P_{m,n} \longrightarrow u_{-m,-n} \cdot (qt)^{\frac {m}2\delta_{m<0}} 
$$
We thus obtain integral formulas for the operators $U_{\pm m, \pm n} : \Lambda \longrightarrow \Lambda$ via \eqref{eqn:upp2} and \eqref{eqn:upm2}. However, the rational function $p_{m,n}$ does not add any new poles to the integral, because its denominator is canceled by the numerator of the product of $\omega$'s. So we can backtrack the discussion in Subsection \ref{sub:order} and move the contours to obtain the following: \\

\begin{theorem} 
\label{thm:power}

For any $m \in \BZ$ and $n>0$, the operators $U_{\pm m,\pm n}$ are given by:
$$
U_{m,n} = \frac {(qt)^{\frac n2}}{(q^{g}-1)(t^g-1)} \int_{|z_1|\gg...\gg |z_n|} Dz_1...Dz_n \cdot p_{m,n}(z_1,...,z_n) \prod_{1\leq i<j \leq n} \omega \left( \frac {z_i}{z_j} \right)
$$

$$
\exp \left(- \sum_{k\geq 1} \beta_k p_k \frac {(z_1^{-k}+...+z_n^{-k}) q^{-\frac k2} t^{-\frac k2}}k \right)  \exp \left(- \sum_{k\geq 1} \beta_k p_k^\dagger \frac {z_1^k+...+z_n^k}k \right) 
$$
and:
$$
U_{-m,-n} = \frac {(qt)^{\frac n2}}{(q^{g}-1)(t^g-1)} \int_{|z_1|\ll ... \ll |z_n|} Dz_1...Dz_n \cdot p_{m,n}(z_1,...,z_n) \prod_{1\leq i<j \leq n} \omega \left( \frac {z_i}{z_j} \right)
$$
$$
\exp \left(\sum_{k\geq 1} \beta_k p_k \frac {(z_1^{-k}+...+z_n^{-k}) }k \right)  \exp \left(\sum_{k\geq 1} \beta_k p_k^\dagger \frac {(z_1^k+...+z_n^k)q^{\frac k2} t^{\frac k2}}k \right) 
$$
When $m=0$, this gives precisely Theorem \ref{thm:one}. \\

\end{theorem}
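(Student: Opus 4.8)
The plan is to run the shuffle element $P_{m,n}$ of \eqref{eqn:p} through the two mechanisms already assembled above: the identification of $P_{m,n}$ with the generator $u_{m,n}$ of $\CA$, and the integral formulas \eqref{eqn:upp2}, \eqref{eqn:upm2} for the action of an arbitrary shuffle element on $\Lambda$. By Theorem 1.1 of \cite{N}, under \eqref{eqn:iso1} and \eqref{eqn:iso2} the element $P_{m,n}$ maps to $u_{m,n}\cdot (qt)^{-\frac m2\delta_{m<0}}$ and to $u_{-m,-n}\cdot (qt)^{\frac m2\delta_{m<0}}$ respectively; composing with the action of Subsection \ref{sub:act1} then shows that, up to the indicated power of $(qt)^{\frac12}$, the operator $U_{\pm m,\pm n}$ is the one attached to $P_{m,n}$ by \eqref{eqn:upp2} in the $+$ case and by \eqref{eqn:upm2} in the $-$ case.

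First I would substitute $P = P_{m,n}$ into \eqref{eqn:upp2}. Since $P_{m,n}$ is $\frac{(q-1)^n(t-1)^n}{(q^g-1)(t^g-1)}$ times $\sym\big[p_{m,n}(z_1,\dots,z_n)\prod_{i<j}\omega(z_i/z_j)\big]$, while both the product of exponentials $E_+$ and the contour $|z_1|=\dots=|z_n|$ are symmetric in the $z_i$, each of the $n!$ terms of the symmetrization integrates to the same expression, so the $\sym$ and the $\frac{1}{n!}$ of \eqref{eqn:upp2} cancel and one is left with $p_{m,n}\prod_{i<j}\omega(z_i/z_j)\cdot E_+$ integrated over the equal-radius torus. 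Since $(q-1)(t-1) = (qt)^{\frac12}\beta_1$, the scalar collapses to $\frac{(qt)^{n/2}}{(q^g-1)(t^g-1)}$; the lower half is treated identically via \eqref{eqn:upm2}, with $E_+$ replaced by $E_-$. The compensating factor $(qt)^{\pm\frac m2\delta_{m<0}}$ coming from \eqref{eqn:iso1}, \eqref{eqn:iso2} is purely a matter of bookkeeping.

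Next I would undo the contour deformation of Subsection \ref{sub:order}. The integrand $p_{m,n}(z)\prod_{i<j}\omega(z_i/z_j)\,E_\pm$ has, in the $z_i$, no poles besides those at $z_i = qz_j$ and $z_i = tz_j$ coming from the factors $\omega(z_i/z_j)$: the apparent poles of $p_{m,n}$ along $z_i = (qt)z_{i+1}$ are killed by the zero of $\omega(z_i/z_{i+1})$ there. Hence moving the contour from $|z_1| = \dots = |z_n|$ back to $|z_1|\gg\dots\gg|z_n|$ (respectively $|z_1|\ll\dots\ll|z_n|$) is literally the inverse of the step performed in Subsection \ref{sub:order} for the elements \eqref{eqn:basis}, so the two integrals coincide, and this produces the two displayed formulas. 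This is the step I expect to be the main obstacle: one must be sure that passing between the symmetrized equal-radius presentation and the un-symmetrized nested presentation neither discards nor creates residues, i.e. that the cancellation between the denominator of $p_{m,n}$ and the numerator of $\prod_{i<j}\omega(z_i/z_j)$ is the only subtlety involved. The genuinely hard input — that $P_{m,n}$ satisfies the wheel conditions and represents $u_{m,n}$ — lives in \cite{N} and is being cited, not reproved.

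Finally, for $m=0$ one has $g=\gcd(0,n)=n$ and $a=1$, so $\prod_i z_i^{\lfloor im/n\rfloor - \lfloor (i-1)m/n\rfloor} = 1$ and the sum in the numerator of $p_{0,n}$ telescopes to $\sum_{i=1}^n (qt)^{n-i}z_n/z_i$; thus $p_{0,n}$ is exactly the rational function appearing in Theorem \ref{thm:one}. Moreover $(q^n-1)(t^n-1) = (qt)^{n/2}\beta_n$, so the prefactor becomes $1/\beta_n$. Since the two actions of $\CA$ on $\Lambda$ coincide and $U_{0,n} = \D_n/\beta_n$ in the description of Subsection \ref{sub:act2}, multiplying through by $\beta_n$ recovers precisely the formula of Theorem \ref{thm:one}.
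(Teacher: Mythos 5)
Your proposal is correct and follows exactly the route the paper takes: it cites Theorem 1.1 of \cite{N} to identify $P_{m,n}$ with $u_{\pm m,\pm n}$ (up to the power of $(qt)^{\frac 12}$), substitutes into \eqref{eqn:upp2} and \eqref{eqn:upm2}, and then reverses the contour deformation of Subsection \ref{sub:order} using the cancellation of the denominator of $p_{m,n}$ against the numerator of $\prod_{i<j}\omega(z_i/z_j)$. Your bookkeeping of the scalars via $(q-1)(t-1)=(qt)^{\frac 12}\beta_1$ and the $m=0$ telescoping check against Theorem \ref{thm:one} are both accurate and in fact supply details the paper leaves implicit.
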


\subsection{} The above formulas for $U_{\pm m, \pm n}$ are particular cases of a more general construction, which we will now explain. For the remainder of this paper, let us assume $\gcd(m,n)=1$. Then the operators $\{U_{\pm km, \pm kn}\}_{k\in {{\mathbb{N}}}}$ form a commutative family, in virtue of relation \eqref{eqn:rel1}. It is well-known (\cite{BS}, \cite{F}, \cite{SV}) that this family is naturally isomorphic to the ring $\Lambda$ itself, via the map:
$$
p_k \longrightarrow \text{suitably chosen constant} \cdot u_{\pm km, \pm kn}
$$
In terms of the shuffle algebra, we obtain for each $\gcd(m,n)$ an embedding:
$$
\Lambda \stackrel{\Upsilon_{m,n}}\longrightarrow \CS, \qquad \qquad p_k \longrightarrow \frac {(q^k-1)(t^k-1)}{(q^{km}-1)(t^{km}-1)} \cdot P_{km,kn}
$$
of \eqref{eqn:p}. We can describe the image under this map of a set of linear generators of $\Lambda$, known as rim-hook (or ribbon) skew Schur functions. To construct these, note that to any sequence $\e \in \{0,1\}^{k-1}$ we may associate a rim-hook skew Young diagram \footnote{Meaning one which doesn't contain any $2\times 2$ boxes} by the following procedure: starting on the horizontal axis, move one box left or one box up, depending on whether the corresponding entry of $\e$ is 0 or 1. Finally shift the whole diagram horizontally so that it rests on the vertical axis, and let $s_\e \in \Lambda_k$ denote the skew Schur function corresponding to that skew diagram. These skew Schur functions are linear generators of $\Lambda$, although there are linear relations between them. Beside the usual relations:
$$
h_k = s_{(0,...,0)}, \qquad e_k = s_{(1,....,1)},
$$
\begin{equation}
\label{eqn:ps}
p_k = s_{(0,...,0)} - s_{(0,0,...,1)} + ... + (-1)^{k-1} s_{(1,...,1)}
\end{equation}
we do not know how other well-known symmetric functions can be written linearly in terms of $s_\e$, although this would be a very interesting question. Then it is proved in \cite{N} that the map $\Upsilon_{m,n}$ sends $s_\e$ to the shuffle element:
$$
(-qt)^{\#\text{ of ones in }\e} \cdot \sym \left[ \frac {\prod_{i=1}^{kn} z_i^{\left \lfloor \frac {im}n \right \rfloor - \left \lfloor \frac {(i-1)m}n \right \rfloor - \e_{\frac ik} + \e_{\frac {i-1}k}}}{\left(1 - \frac {z_2 qt}{z_1}\right)...\left(1 - \frac {z_{kn} qt}{z_{kn-1}}\right)} \prod_{1\leq i<j \leq kn} \omega \left( \frac {z_i}{z_j} \right) \right]
$$
\footnote{We set $\e_x=0$ unless $x\in \{1,...,k-1\}$} This allows us to obtain formulas as in Theorem \ref{thm:power} for any rim-hook skew Schur function in the operators $u_{km,kn}$. In the particular case when $m=0$, we conclude the following: \\

\begin{corollary} The Macdonald eigenoperator $D_\e = D_{s_\e}$ acts on $\Lambda$ by:
$$
D_\e = \int_{|z_1|\gg...\gg |z_n|} Dz_1...Dz_n \frac { \prod_{i=1}^{n-1} \left(-\frac {z_{i+1} qt}{z_i}\right)^{\e_i} \prod_{i<j} \omega \left( \frac {z_i}{z_j} \right)}{\left(1 - \frac {z_2 qt}{z_1}\right)...\left(1 - \frac {z_{n} qt}{z_{n-1}}\right)}
$$

\begin{equation}
\label{eqn:want}
\exp \left(- \sum_{k\geq 1} \beta_k p_k \frac {(z_1^{-k}+...+z_n^{-k}) q^{-\frac k2} t^{-\frac k2}}k \right)  \exp \left(- \sum_{k\geq 1} \beta_k p_k^\dagger \frac {z_1^k+...+z_n^k}k \right) \qquad \qquad \quad
\end{equation}

\end{corollary}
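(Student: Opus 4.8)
The plan is to specialize the general formula for $\Upsilon_{m,n}(s_\e)$ stated just above to the case $m=0$, and then pass from the shuffle element to the corresponding operator on $\Lambda$ via the integral formula \eqref{eqn:upp2}. First I would set $m=0$ in the expression for $\Upsilon_{m,n}(s_\e)$; since $\lfloor \frac{im}{n} \rfloor = 0$ for all $i$ when $m=0$, the exponents $\lfloor \frac{im}{n}\rfloor - \lfloor \frac{(i-1)m}{n}\rfloor$ all vanish, and only the terms $-\e_{i/k} + \e_{(i-1)/k}$ survive. With $n$ in place of $k$ (here the partition has $k=n$ so that $s_\e \in \Lambda_n$, and the shuffle element lives in $n$ variables), this collapses to $z_i^{\e_{i-1}-\e_i}$ with the convention $\e_0 = \e_n = 0$. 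Collecting these monomials against the denominator $\prod_i (1 - z_{i+1}qt/z_i)$ and pulling the sign $(-qt)^{\#\{i:\e_i=1\}}$ into the product, one rewrites the numerator as $\prod_{i=1}^{n-1}\big(-\frac{z_{i+1}qt}{z_i}\big)^{\e_i}$ times $\big(\prod_i(1-z_{i+1}qt/z_i)\big)^{-1}$ — this is the bookkeeping step: each $\e_i=1$ contributes a factor $z_i^{-1}$ from position $i$, a factor $z_{i+1}$ from position $i+1$, a factor $qt$, and a sign, which is exactly $-z_{i+1}qt/z_i$.

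Next I would feed this shuffle element into \eqref{eqn:upp2}, which expresses $U_P^+$ as a contour integral of $P(z_1,\dots,z_n)$ against $E_+$. Because the denominator $\prod_i(1-z_{i+1}qt/z_i)$ introduces no pole at $z_i = qz_j$ or $z_i = tz_j$ (the poles it does create are at $z_i = qt\,z_{i+1}$, which lie strictly inside the region $|z_1|\gg\cdots\gg|z_n|$), the contour may be deformed back to the nested configuration $|z_1|\gg\cdots\gg|z_n|$ exactly as in the passage from Subsection \ref{sub:order} to Theorem \ref{thm:power}; this lets me drop the symmetrization and the $1/n!$, replacing $P$ by its pre-symmetrized representative times $\prod_{i<j}\omega(z_i/z_j)$. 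The normalization constant is handled by the same computation as in Theorem \ref{thm:power}: the prefactor $(q-1)^n(t-1)^n/((q^g-1)(t^g-1))$ in $P_{m,n}$, combined with the $1/\beta_1^n$ in \eqref{eqn:upp} and the $(q^k-1)(t^k-1)/((q^{km}-1)(t^{km}-1))$ in the definition of $\Upsilon_{m,n}$, is tuned so that at $m=0$ everything cancels to $1$ — I should simply check this cancellation directly rather than invoke it.

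Finally, observe that by \eqref{eqn:ps} and linearity of $g \mapsto \D_g$, the operator attached to $s_\e$ under $\Upsilon_{0,n}$ composed with the action is precisely $\D_{s_\e}$: the map $\Upsilon_{0,n}$ is, up to the identification of the commutative family $\{U_{0,kn}\}$ with $\Lambda$, the assignment $p_k \mapsto \D_k/\beta_k$ of Subsection \ref{sub:act2}, so it sends a general symmetric function $g$ to $\D_g$, and in particular $s_\e \mapsto \D_{s_\e} = \D_\e$. Substituting $E_+$ from \eqref{eqn:upp} gives the two exponential factors in \eqref{eqn:want}, completing the identification. The only genuinely delicate point is the contour-deformation/normalization step: one must be sure that the factor $p_{m,n}$ (here its $m=0$ specialization) truly contributes no new poles blocking the deformation, and that the residue conventions (which residues survive as $|z_1|\gg\cdots\gg|z_n|$) match on both sides — but this is precisely the argument already run in Subsection \ref{sub:order} and Theorem \ref{thm:power}, so it carries over verbatim.
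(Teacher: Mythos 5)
Your argument is correct in outline, but it is a genuinely different route from the one the paper takes. You prove the corollary ``from above'': you take the formula for $\Upsilon_{m,n}(s_\e)$ quoted from \cite{N}, specialize it at $m=0$ (where the exponents $\lfloor \frac{im}{n}\rfloor - \lfloor \frac{(i-1)m}{n}\rfloor$ indeed vanish and the $\e$-contributions telescope into $\prod_i(-z_{i+1}qt/z_i)^{\e_i}$), push the resulting shuffle element through \eqref{eqn:upp2}, and undo the contour symmetrization as in Subsection \ref{sub:order}. The paper instead argues ``from below'': it uses the Littlewood--Richardson identity $s_\e s_{\e'} = s_{\e 0\e'} + s_{\e 1\e'}$ to get the recursion $\D_\e \D_{\e'} = \D_{\e 0\e'} + \D_{\e 1\e'}$, checks that the candidate integral operators satisfy the same recursion, and then observes that by induction it suffices to verify the equality for a single $\e$ in each degree --- namely for the alternating sum \eqref{eqn:ps} giving $p_n$, which is exactly Theorem \ref{thm:one}. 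The trade-off is real: the paper's proof needs only the $P_{m,n}$ (power-sum) case of \cite{N} as input and entirely sidesteps normalization questions, while yours needs the finer $s_\e$ formula of \cite{N} as a black box \emph{and} a careful normalization check. That check is the one soft spot you should not wave at: the constant $\frac{(q^k-1)(t^k-1)}{(q^{km}-1)(t^{km}-1)}$ in the definition of $\Upsilon_{m,n}$ is literally $0/0$ at $m=0$, so ``check the cancellation directly'' requires you to first re-derive the correct normalization of the vertical family $p_k \mapsto \text{const}\cdot u_{0,k}$ (e.g.\ by matching $U_{0,n}=\D_n/\beta_n$ against Theorem \ref{thm:power} at $m=0$), and to note that $g\mapsto \D_g$ is multiplicative so that fixing the constant on the $p_k$ fixes it on the $s_\e$. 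With that point filled in, your derivation is the ``direct'' argument that the paper's prose preceding the corollary alludes to but deliberately does not carry out.
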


\begin{proof} The Littlewood-Richardson rule easily implies that $s_\e\cdot s_{\e'} = s_{\e0\e'} + s_{\e1\e'}$, hence we can infer a similar relation for the Macdonald eigenoperators:
\begin{equation}
\label{eqn:lr}
D_\e \cdot D_{\e'} = D_{\e0\e'} + D_{\e1\e'}
\end{equation}
If we write $D_\e'$ for the operator in the RHS of \eqref{eqn:want}, it is easy to see that:
$$
D'_\e \cdot D'_{\e'} = D'_{\e0\e'} + D'_{\e1\e'}
$$
and hence the induction hypothesis implies the equality between differences of two $D$s and differences of two $D'$s. In order to infer that all $D$s are equal to the corresponding $D'$s, it is enough to prove so for a single $\e$, and in fact it is enough to claim it for $p_n$. In this case, the desired equality is simply Theorem \ref{thm:one}.

\end{proof}

\end{document}